\newtheorem{thm}{Theorem}[section]
\newtheorem{prop}[thm]{Proposition}
\newtheorem{lemma}[thm]{Lemma}
\newtheorem{cor}[thm]{Corollary}
\newtheorem{defn}[thm]{Definition}
\newtheorem{exam}[thm]{Example}
\newtheorem{rem}[thm]{Remark}
\begin{document}


\begin{center}
\LARGE \textbf{Perturbations of intermediate C$^*$-subalgebras for simple C$^*$-algebras}
\end{center}

\

\begin{center}
\large SHOJI INO AND YASUO WATATANI
\end{center}

\

\noindent \textsc{Abstract.}
We study uniform perturbations of intermediate C$^*$-subalgebras 
of inclusions of simple C$^*$-algebras. 
If a unital simple C$^*$-algebra has a simple 
C$^*$-subalgebra of finite index, then sufficiently close simple 
intermediate C$^*$-subalgebras are unitarily equivalent. These 
C$^*$-subalgebras need not to be nuclear.  
The unitary can be chosen in the relative commutant algebra. 
An imediate corollary is the following: 
If the relative commutant is trivial, then 
the set of intermediate C$^*$-subagebras is a finite set. 

\section{Introduction}

 The study of the uniform perturbation theory of operator algebras 
was started with \cite{Kadison} by Kadison and Kastler in 1972.  
They introduced a metirc on  the set of C$^*$-subalgebras of 
a fixed C$^*$-algebra 
by the Hausdorff distance between the unit balls. 
A conjugacy by a unitary near to the identity gives close C$^*$-subalgebras. 
They conjectured that suitably close C$^*$-algebras 
must be unitarily equivalent. Although Choi and Christensen gave counterexamples 
to the conjecture in \cite{Choi}, 
the conjecture has been verified in various situations. 
The problem was solved positively when one algebra is separable and AF 
in Chiristensen \cite{Chris3} and separable C$^*$-algebras of 
continuous trace by Phillips-Raeburn \cite{Phillips}. Khoshkam 
\cite{Khoshkam} showed that sufficiently close nuclear C$^*$-algebras 
have isomorphic K-groups. 
Recently Christensen, Sinclair, Smith, White and Winter 
have solved it completely  when one algebra is separable and nuclear 
in \cite{Chris4}.

In this paper 
we study uniform perturbations of intermediate C$^*$-subalgebras of 
inclusions of simple C$^*$-algebras. 
If a unital simple C$^*$-algebra has a simple 
C$^*$-subalgebra of finite index, then sufficiently close simple
intermediate C$^*$-subalgebras are unitarily equivalent.
Thanks to Izumi's result in \cite{Izumi}, we do not need to 
assume the existence of the conditional expectations 
onto intermediate C$^*$-subalgebras apriori. 
Furthermore, if the relative commutant is trivial, then 
the set of intermediate C$^*$-subagebras is a finite set.

In the case of subfactor theory of  Jones \cite{Jones}, 
the lattices of intermediate subfactors with  their finiteness 
were studied  in   Popa \cite{Po}, 
Watatani \cite{Watatani2}, Teruya-Watatani \cite{TW}, 
Longo \cite{Longo}, Khoshkam-Mashhood \cite{Kh-Ma}, Grossman-Jones \cite{G-J}, 
Grossman-Izumi \cite{G-I} and  Xu \cite{Xu} for example. In these study, 
the $\| \cdot\|_2$-perterbation technique of 
von Neumann algebras developed by 
Christensen \cite{Chris2} are essentially used.

We prepare to consider {\it uniform} perturbations of 
intermediate C$^*$-subalgebras 
of inclusions of not necessarily simple C$^*$-algebras. 
But in general, we need to assume the existance of 
the conditional expectations 
onto intermediate C$^*$-subalgebras to get a positive solution 
for the moment.

\section{Metric and index for C$^*$-subalgebras}

Let $C$ be a C$^*$-algebra. We denote by $C_1$ the unit ball of $C$. 
We recall a metric on the set of all subalgebras of a C$^*$-algebra $C$  
after Kadison and Kastler \cite{Kadison}. 
Let $A$ and $B$ be C$^*$-subalgebras of $C$. Then 
the distance $d(A,B)$ between $A$ and $B$ is defined by 
the Hausdorff distance between the unit balls of $A$ and $B$, that is, 
$$
d(A,B) = \max\left\{\sup_{a \in A_1} \inf_{b \in B_1} \|a-b\| \ , 
 \               \sup_{b \in B_1} \inf_{a \in A_1} \|b-a\| \right\}.
$$
Therefore if $d(A,B)<\gamma$, then for each $x$ in the unit ball of either $A$ or $B$, there exists $y$ in the unit ball of the other algebra with
$\|x-y\|<\gamma$.

We need the following known fact to show that a desired inclusion is onto. 

\begin{lemma}\label{lem:<1}
Let $A$ and $B$ be $\mathrm{C}^*$-subalgebras of a $\mathrm{C}^*$-algebra. 
If $A\subset B$ and $d(A,B)<1$, then $A=B$. 
\end{lemma}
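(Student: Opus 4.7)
\medskip

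\noindent \textbf{Proof plan.}
Since $A\subset B$, the one-sided supremum $\sup_{a\in A_1}\inf_{b\in B_1}\|a-b\|$ is zero, so the hypothesis reduces to
\[
\gamma := \sup_{b\in B_1}\inf_{a\in A_1}\|b-a\| \;<\; 1.
\]
My plan is to show that $B_1\subset A$, which by linearity forces $A=B$. To do this I will pick any $\gamma'$ with $\gamma<\gamma'<1$ and run a standard geometric-series approximation.

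Fix $b\in B_1$. Since the infimum above is strictly less than $\gamma'$, there exists $a_0\in A_1$ with $\|b-a_0\|<\gamma'$. Then $(b-a_0)/\gamma'$ has norm strictly less than $1$, so lies in $B_1$, and I may apply the same estimate to it to produce $a_1\in A_1$ with $\|(b-a_0)/\gamma'-a_1\|<\gamma'$, i.e.\ $\|b-a_0-\gamma'a_1\|<(\gamma')^2$. Iterating gives a sequence $(a_n)\subset A_1$ with
\[
\Bigl\|b-\sum_{k=0}^{n}(\gamma')^k a_k\Bigr\| < (\gamma')^{n+1}.
\]
The partial sums form a Cauchy sequence in $A$ because $\sum (\gamma')^k\|a_k\|\le\sum(\gamma')^k<\infty$, so they converge to some $\tilde a\in A$ (here I use that $A$ is closed). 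Since the error bound tends to $0$, we get $b=\tilde a\in A$.

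As $b\in B_1$ was arbitrary, $B_1\subset A$, hence $B=A$. The only delicate point is the initial choice of $\gamma'$ strictly between $\gamma$ and $1$: one needs the strict inequality $\gamma<1$ to guarantee that the iterate $(b-a_0)/\gamma'$ still lies in the \emph{closed} unit ball of $B$ so that the approximation step can be repeated, and also to ensure geometric decay of the error. Everything else is routine Banach space bookkeeping, and no C$^*$-algebraic structure beyond closedness of $A$ is used.
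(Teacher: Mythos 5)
Your proof is correct. The paper states this lemma as a ``known fact'' and gives no proof at all, so there is nothing to compare against; your geometric-series iteration is the standard argument, and you correctly identify the two points where the hypotheses are used: $A\subset B$ guarantees that the remainder $b-\sum_{k=0}^{n}(\gamma')^k a_k$ stays in $B$ (so the approximation step can be repeated), and $d(A,B)<1$ gives the geometric decay that makes the series converge in the closed subalgebra $A$.
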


The next proposition records some standard estimates.

\begin{lemma}\label{lem:estimates}
Let $A$ be a unital $\mathrm{C}^*$-algebra.
\begin{enumerate}
\item 
Let $x \in A$ satisfy that $\|x-1\|<1$ and 
$u \in A$ be the unitary in the polar decomposition $x = u|x|$. 
Then
\[ 
\|u-1\|\le\sqrt{2}\|x-1\|. 
\]
\item
Let $p \in A$ be a projection and $a \in A$ be self-adjoint. 
Assme that $\delta := \|a- p\| < 1/2$. 
Let $q = \chi_{[1-\delta,1+\delta]}(a)$. Then $q$ is a projection 
in $C^*(a,I)$  such that $\|q-p\| \leq 2\|a- p\|< 1$.
\item
Let $p$ and $q$ be projections in $A$ with $\|p-q\|<1$. 
Then there exists a unitary $w\in A$ such that 
\[
wpw^*=q,  \ \ \text{ and } \ \  
\|w-1\|\le \sqrt{2}\|p-q\|.
\]
\end{enumerate}
\end{lemma}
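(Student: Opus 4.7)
\medskip

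\noindent\emph{Proof proposal.} All three items are standard, and I would treat them independently, with~(1) being the analytic heart and~(2)--(3) soft consequences of spectral calculus and the polar decomposition.

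For~(1), since $\|x - 1\| < 1$ the element $x$ is invertible, hence $|x| = (x^*x)^{1/2}$ is invertible too and $u := x|x|^{-1}$ is a genuine unitary. My plan is to use the identity $\|u - 1\|^2 = \|(u-1)^*(u-1)\| = \|2 - u - u^*\|$ and reduce to the self-adjoint inequality $2 - u - u^* \le 2\|x-1\|^2 \cdot 1$. From the positivity $(x-1)^*(x-1) \le \|x-1\|^2 \cdot 1$, i.e.\ $|x|^2 + 1 - x - x^* \le \|x-1\|^2 \cdot 1$, combined with $(|x| - 1)^2 \ge 0$, one obtains $2|x| - x - x^* \le \|x-1\|^2 \cdot 1$. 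Using $x = u|x|$ and $x^* = |x|u^*$, this is converted into a controlled estimate on $2 - u - u^*$, at the cost of a factor of $2$ (producing the $\sqrt{2}$). The care needed is that $u$ and $|x|$ do not commute, so the symmetrisation has to be carried out attentively; this is where I expect the main work.

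For~(2), self-adjointness of $a$ with $\|a - p\| \le \delta < 1/2$ forces $\sigma(a) \subset [-\delta, \delta] \cup [1-\delta, 1+\delta]$, so $\chi_{[1-\delta, 1+\delta]}$ is continuous on $\sigma(a)$ and $q := \chi_{[1-\delta, 1+\delta]}(a)$ is a spectral projection lying in $C^*(a, 1)$. The function $t \mapsto \chi_{[1-\delta, 1+\delta]}(t) - t$ has modulus at most $\delta$ on $\sigma(a)$, so continuous functional calculus gives $\|q - a\| \le \delta$, and the triangle inequality then yields $\|q - p\| \le 2\delta < 1$.

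For~(3), I would use the classical Kadison--Kastler construction $z := qp + (1 - q)(1 - p) = 1 + (q - p)(2p - 1)$. Since $\|2p - 1\| \le 1$, we have $\|z - 1\| \le \|p - q\| < 1$, so $z$ is invertible. A direct computation gives $zp = qp = qz$, and $z^*z = pqp + (1 - p)(1 - q)(1 - p)$ commutes with $p$, hence so does $|z|$ by continuous functional calculus. Writing the polar decomposition $z = w|z|$ and cancelling the invertible $|z|$ in $zp = qz$ gives $wp = qw$, i.e., $wpw^* = q$. Part~(1) applied to $z$ then yields the bound $\|w - 1\| \le \sqrt{2}\,\|z - 1\| \le \sqrt{2}\,\|p - q\|$. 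The sharp constant $\sqrt{2}$ in~(1) is what allows~(3) to deliver a tight perturbation estimate on projections, so it is worth spending the extra care on~(1).
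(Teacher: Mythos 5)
The paper records this lemma without proof (it is introduced as a list of ``standard estimates''), so there is no in-paper argument to compare against; I can only assess your proposal on its own terms. Parts (2) and (3) are complete and correct: the spectral inclusion $\sigma(a)\subset[-\delta,\delta]\cup[1-\delta,1+\delta]$, the pointwise bound $|\chi_{[1-\delta,1+\delta]}(t)-t|\le\delta$ on $\sigma(a)$, and the Kadison--Kastler element $z=qp+(1-q)(1-p)$ with $zp=qz$ and $z^*z$ commuting with $p$ all check out, and (3) correctly reduces its norm estimate to (1).

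The gap is in (1), exactly at the step you yourself defer as ``the main work,'' and the route you commit to cannot be completed on the stated range $\|x-1\|<1$. Writing $\varepsilon=\|x-1\|$, you weaken $|x|^2+1-x-x^*\le\varepsilon^2$ to $2|x|-x-x^*\le\varepsilon^2$ using $(|x|-1)^2\ge0$, and then propose to pass to $2-u-u^*\le 2\varepsilon^2$ ``at the cost of a factor of $2$,'' which can only mean invoking a lower bound $|x|\ge\tfrac12\cdot 1$; but the only available bound is $|x|\ge(1-\varepsilon)1$, so this works only when $\varepsilon\le\tfrac12$. The loss is not just in the bookkeeping: the scalar $x=re^{i\theta}$ with $r=0.2$ and $\cos\theta=-0.6$ satisfies $2r(1-\cos\theta)=0.64=(0.8)^2$ and $r\in[1-0.8,\,1+0.8]$, yet $|e^{i\theta}-1|=\sqrt{3.2}>\sqrt{2}\cdot 0.8$; so no argument using only the weakened inequality together with the spectral localization of $|x|$ can recover the constant $\sqrt2$ once $\varepsilon>\tfrac12$. (This $x$ of course has $\|x-1\|>0.8$ --- that is precisely the information your first step discards.) The standard repair keeps the term $|x|^2+1$: let $c=\min\operatorname{Re}\sigma(u)$, so $\|u-1\|^2=2-2c$ by normality; pick $\lambda\in\sigma(u)$ with $\operatorname{Re}\lambda=c$ and a state $\omega$ of $A$ extending evaluation at $\lambda$ on $C^*(u,1)$. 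Since $|\omega(u)|=1$, Cauchy--Schwarz gives $\omega(ua)=\lambda\,\omega(a)$ for all $a$, so applying $\omega$ to $(x-1)^*(x-1)\le\varepsilon^2$ and using $\omega(|x|^2)\ge \omega(|x|)^2=:m^2$ yields $(m-c)^2+1-c^2\le\varepsilon^2$ with $m\ge 1-\varepsilon>0$; this forces $c\ge 0$ and then $1-c\le\varepsilon^2/(1+c)\le\varepsilon^2$, i.e.\ $\|u-1\|\le\sqrt2\,\varepsilon$. For the perturbation estimates in this paper $\varepsilon$ is always small, so your weaker version would suffice downstream, but it does not prove the lemma as stated.
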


We recall some basic facts on index for C$^*$-subalgebas in \cite{Watatani1}.

\begin{defn}\upshape
Let $B$ be a C$^*$-algebra and $A$ a C$^*$-subalgebra of 
$B$ with a common unit. Let $E$ be a conditional expectation of $B$ onto $A$. 
We say that $E$ is {\it of finite index} 
if there exists a finite set $\{u_1,\dots,u_N\}\subset B$, 
called a \textit{(quasi-)basis} for $E$, such that
\[ b=\sum_{i=1}^N u_iE(u_i^*b) \ , \ \ \text{ \ for any \  }  b\in B. \] 
When $E$ is of finite index, then  the index of $E$ is defined by
\[ \mathrm{Index}\,E=\sum_{i=1}^N u_iu_i^* . \] 
\end{defn}

The value $\mathrm{Index}\,E$ is in the center of $B$ and 
does not depend on the choice of a basis for $E$. Moreover 
$\mathrm{Index}\,E$ is positive invertible operator in $B$. 
In fact $\mathrm{Index}\,E \geq I$.

We can choose a basis for $E$  in the unit ball of $B$ if it is necessary. 
In fact, choose a positive integer $K$ such that 
 $K \geq \max \{\|u_1\|, \dots, \|u_N\| \}$.  Define 
\[
v_{(i-1)K+j} = \frac{1}{\sqrt{K}}u_i, \ \ \text{ \ for \ }
i = 1,2,\dots, N, \ j= 1,2, \dots, K. 
\]
Then $\{v_1, \dots, v_{KN}\}$ is a desired basis. 

Next, we recall the C$^*$-basic construction. 
Let $E:B\to A$ be a faithful conditional expectation. 
Define a $A$-valued inner product on $B$ by
\[ \langle x, y\rangle_A=E(x^*y) \ , \ \ x,y\in B .\]
We denote by  $\mathcal{E}$  the completion of $B$. Then 
$\mathcal{E}$ becomes a 
Hilbert $A$-module. Let $\eta: B \rightarrow \mathcal{E}$ be 
the natural inclusion map. Thus $\|\eta(x)\| = \|E(x^*x)\|^{1/2}$. 
Let $\mathcal{L}_A(\mathcal{E})$ 
be the set of bounded $A$-module maps on $\mathcal{E}$ with adjoints. 
Let $\mathcal{K}_A(\mathcal{E})$ be the set of ``compact''
operators on $\mathcal{E}$.

For $b\in B$, define $\lambda(b)\in \mathcal{L}_A(\mathcal{E})$ by
\[ \lambda(b)\eta(x)=\eta(bx) \ , \ \ x\in B. \]
Then $\lambda:B\to \mathcal{L}_A(\mathcal{E})$ turns out to be an injective $*$-homomorphism. Define the Jones projection 
$e_A\in \mathcal{L}_A(\mathcal{E})$ by
\[
 e_A\eta(x)=\eta(E(x)) \ , \ \ x\in B .
\]
The C$^*$-basic construction $C^*\langle B,e_A\rangle$ is defined by 
the closure of the linear span of $\{\lambda(x)e_A\lambda(y) | x,y\in B\}$.

Moreover $\lambda$ and $e_A$ satisfy the following: 
\begin{enumerate}
\item (covariant relation)  $e_A\lambda(b)e_A=\lambda(E(b))e_A$ \ for $b\in B$. \item  Let $b\in B$. Then $b$ is in $A$ if and only 
if $e_A\lambda(b)=\lambda(b)e_A$. 
\end{enumerate}

\begin{defn} \upshape
Let $D$ be a $\mathrm{C}^*$-algebra and $C$ a $\mathrm{C}^*$-subalgebra of $D$ with a common unit.  Assume that the inclusion $C \subset D$ has a conditional expectation 
$E_C^D:D\to C$ of finte index. 
We denote by  $\mathrm{IMS}(C,D,E_C^D)$ the set of all intermediate 
$\mathrm{C}^*$-subalgebra $A$ between $C$ and $D$  with  
a conditional expectation $E_A^D:D\to A$ 
satisfying the compatibility condition $E_C^A\circ E_A^D=E_C^D$, 
where $E_C^A:= E_C^D|_A:A\to C $ is the coditional expectation defined by 
the restriction of $E_C^D$ to $A$. 
\end{defn}

Let $A$ be in $\mathrm{IMS}(C,D,E_C^D)$. If 
there exists another conditional expectation 
$F_A^D:D\to A$ 
satisfying the compatibility condition $E_C^A\circ F_A^D=E_C^D$, 
then $F_A^D = E_A^D$. In fact for any $x \in D$ and $a \in A$, 
we have that $E_C^D(F_A^D(ax))=E_C^D(ax)$. Hence 
\[
E_C^D(aF_A^D(x))=E_C^D(ax)= E_C^D(aE_A^D(x)). 
\]
Then $E_C^D(a(F_A^D(x)-E_A^D(x)))=0.$  
Put $a = (F_A^D(x)-E_A^D(x))^*$. Since $E_C^D$ is 
of finite index, $E_C^D$ is faithful. 
This implies that $F_A^D = E_A^D$.

For any $A$ in $\mathrm{IMS}(C,D,E_C^D)$, the conditional expectation 
$E_C^A=E_C^D|_A:A\to C $ is of finite index.  In fact, 
let $\{u_1,\dots,u_N\}\subset D$ be a basis of $E_C^D$. 
Put $v_i = E_A^D(u_i)$. Then 
$\{v_1,\dots,v_N\}\subset D$ is a basis of $E_C^A$. 
If $u_i$ is in  $D_1$, then $v_i$ is in $A_1$.

But we should be careful that an intermediate 
C$^*$-subalgebra $A$ between $C$ and $D$  may not have 
a conditional expectation $E_A^D:D\to A$ in general.

\begin{exam} \upshape
Let $D = C([0,1], M_2({\mathbb C}))$ be the 
algebra of $2 \times 2$ matrix valued continuous functions 
on the unit interval $[0,1]$. Consider a C$^*$-subalgebra 
$C = C([0,1],{\mathbb C}I) \subset D$. Then there exist a 
conditional expectation $E_C^D:D\to C$ of finte index. In fact, 
define  $(E_C^D(f))(x) = tr(f(x))I$ for $f\in D$ and $x \in [0,1]$, where 
$tr$ is the normalized trace on $M_2({\mathbb C})$. 
Define an intermediate C$^*$-subalgebra $A$ between $C$ and $D$ 
by 
\[
A := \left\{ f \in C([0,1], M_2({\mathbb C})) \ \Big| \ 
      f\left(\frac{1}{2}\right) \in {\mathbb C}I \right\}
\]
Then there exist no conditional expectation of $D$ onto $A$. 
In fact, suppose that there were a conditional expectation 
$E_A^D:D\to A$. 
We can choose $a \in C([0,1], {\mathbb C})$ 
such that $a(1/2)=0$ and $a(x)\neq 0$ for any $x\neq 1/2$. 
Since $\begin{pmatrix} a & 0 \\ 0 & 0 \end{pmatrix} , \begin{pmatrix} a & 0 \\ 0 & a \end{pmatrix} \in A$, we have 
\[
\begin{pmatrix} a & 0 \\ 0 & 0 \end{pmatrix}=
E_A^D\left(\begin{pmatrix} a & 0 \\ 0 & 0 \end{pmatrix}\right)=
E_A^D\left(\begin{pmatrix} {\bf 1} & 0 \\ 0 & 0 \end{pmatrix}\begin{pmatrix} a & 0 \\ 0 & a \end{pmatrix}\right)=
E_A^D\left(\begin{pmatrix} {\bf 1} & 0 \\ 0 & 0 \end{pmatrix} \right) \begin{pmatrix} a & 0 \\ 0 & a \end{pmatrix},
\]
where ${\bf 1}$ is an identity of $C([0,1], \mathbb{C})$. This shows that $E_A^D\left( \begin{pmatrix} {\bf 1} & 0 \\ 0 & 0 \end{pmatrix}\right)(x)=\begin{pmatrix} 1 & 0 \\ 0 & 0 \end{pmatrix}$ for any $x\neq 1/2$. 
By the continuity, $E_A^D\left( \begin{pmatrix} {\bf 1} & 0 \\ 0 & 0 \end{pmatrix}\right)=\begin{pmatrix} {\bf 1} & 0 \\ 0 & 0 \end{pmatrix}$.
But $\begin{pmatrix} {\bf 1} & 0 \\ 0 & 0 \end{pmatrix}\notin A$.
This is a contradiction.  
\end{exam}

There is a relation between $d(A,B)$ and  the norm 
estimate $\|e_A-e_B\|$ of Jones projections $e_A$ and $e_B$ 
for intermediate C$^*$-subalgebras $A,B\in\mathrm{IMS}(C,D,E_C^D)$. 
More precisely, let $\mathcal{E}$  be the completion of $D$ by 
the $C$-valued inner procuct $E_C^D(x^*y)$. Then  
$\mathcal{E}$ becomes a 
Hilbert $C$-module. Let $\eta: D \rightarrow \mathcal{E}$ be 
the natural inclusion map. We can define Jones projections 
for intermediate $C^*$-subalgebras by 
$e_A\eta(x) = \eta(E_A^D(x))$ and 
$e_B\eta(x) = \eta(E_B^D(x))$ for $x \in D$. These projections also 
enjoy  similar properties with a usual Jones projection $e_C$. 
For example, $e_A$ commute with the left multiplication 
operator $\lambda(a)$ for $a \in A$. 

\begin{lemma}\label{lem:Jones projection metric}
Let $D$ be a $\mathrm{C}^*$-algebra and $C$ a $\mathrm{C}^*$-subalgebra of $D$ with a common unit and $E_C^D:D\to C$ a conditional expectation of finite index. 
Then for $A,B\in\mathrm{IMS}(C,D,E_C^D)$ we have that 
\[ d(A,B)\le \|\mathrm{Index}\, E_C^D\| \|e_A-e_B\|. \]
\end{lemma}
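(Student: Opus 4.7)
The plan is, for each $a \in A_1$, to produce an explicit $b \in B_1$ with $\|a-b\|\le\|\mathrm{Index}\, E_C^D\|\,\|e_A-e_B\|$; by symmetry this bounds both halves of the Hausdorff distance. The natural candidate is $b := E_B^D(a)$, which lies in $B_1$ since conditional expectations are contractive.

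The first step is a Hilbert-module identity. Because $a\in A$ we have $E_A^D(a)=a$, whence $e_A\eta(a)=\eta(a)$, while by definition $e_B\eta(a)=\eta(E_B^D(a))=\eta(b)$. Subtracting,
\[
\eta(a-b)=(e_A-e_B)\eta(a),\qquad \|\eta(a-b)\|\le\|e_A-e_B\|\,\|\eta(a)\|\le\|e_A-e_B\|,
\]
where the final inequality uses $\|\eta(a)\|^2=\|E_C^D(a^*a)\|\le\|a\|^2\le 1$.

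The second step upgrades this $\mathcal{E}$-norm estimate to an operator-norm estimate on $D$; this is where the index constant enters. I would fix a quasi-basis $\{u_1,\dots,u_N\}\subset D_1$ for $E_C^D$ (available from the discussion preceding the statement) and read the reproducing formula $a-b=\sum_i u_iE_C^D(u_i^*(a-b))$ as a row-column product $RV$, with $R=(u_1,\dots,u_N)$ and $V=(E_C^D(u_i^*(a-b)))_{i=1}^N$. Then $\|R\|^2=\|\sum_i u_iu_i^*\|=\|\mathrm{Index}\, E_C^D\|$, and the Schwarz inequality for the $2$-positive map $E_C^D$, together with $\sum_i u_iu_i^*\le\|\mathrm{Index}\, E_C^D\|\cdot I$, yields
\[
V^*V\le\sum_i E_C^D\bigl((a-b)^*u_iu_i^*(a-b)\bigr)\le\|\mathrm{Index}\, E_C^D\|\,E_C^D\bigl((a-b)^*(a-b)\bigr),
\]
so that $\|V\|^2\le\|\mathrm{Index}\, E_C^D\|\,\|\eta(a-b)\|^2$. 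Combining with the bound on $\|R\|$ and the first step,
\[
\|a-b\|\le\|R\|\,\|V\|\le\|\mathrm{Index}\, E_C^D\|\,\|e_A-e_B\|.
\]

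The delicate point is this second step: recognizing that a quasi-basis in $D_1$ provides a row/column factorization of each element of $D$ through which an $\mathcal{E}$-valued inequality can be transferred into an operator-norm inequality at the cost of one factor of $\|\mathrm{Index}\, E_C^D\|$. Beyond that, the argument is routine, provided one uses that $E_B^D$ exists and is automatically of finite index for $B\in\mathrm{IMS}(C,D,E_C^D)$ and that a quasi-basis can be chosen in $D_1$, both of which are recorded earlier in the section.
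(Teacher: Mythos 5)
Your argument is correct, and its skeleton is the same as the paper's: take $b=E_B^D(a)$ for $a\in A_1$, observe $\eta(a-b)=(e_A-e_B)\eta(a)$ so that $\|\eta(a-b)\|\le\|e_A-e_B\|$, then convert the Hilbert-module estimate into an operator-norm estimate at the cost of the factor $c:=\|\mathrm{Index}\,E_C^D\|$, and finish by symmetry. The only place you diverge is in how that conversion is justified. The paper simply cites the Pimsner--Popa inequality $E_C^D(x^*x)\ge c^{-1}x^*x$ (Proposition 2.6.2 of Watatani's memoir), which gives $\|x\|\le\sqrt{c}\,\|\eta(x)\|$ and hence, a fortiori, $\|x\|\le c\,\|\eta(x)\|$; you instead reprove a comparison of this type from scratch via the reproducing formula $x=\sum_i u_iE_C^D(u_i^*x)$, the row--column factorization $x=RV$, and the Schwarz inequality for $E_C^D$, obtaining $\|x\|\le\|R\|\,\|V\|\le c\,\|\eta(x)\|$. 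Your route is self-contained and needs only the quasi-basis (it does not even require the $u_i$ to be contractions, since $\|R\|^2=\|\sum_iu_iu_i^*\|=c$ regardless), whereas the citation of Pimsner--Popa is shorter and in fact yields the sharper constant $\sqrt{c}$ at that step --- an improvement the paper does not exploit, since the lemma is stated with $c$. Either way the stated bound $d(A,B)\le c\,\|e_A-e_B\|$ follows; there is no gap.
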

\begin{proof}
Put $c=\|\mathrm{Index}\, E_C^D\|$. Then for any $a\in A_1$, 
we have $\|\eta(a)\| \leq \|a\| \leq 1$. By the 
Pimsner-Popa inequality, $E(x^*x) \geq c^{-1}x^*x$ of Proposition 2.6.2 
in  \cite{Watatani1}, 
\begin{align*}
\|e_A-e_B\| &\ge \frac{\|\eta(E_A^D(a)-E_B^D(a))\|}{\|\eta(a)\|} 
 \ge \|\eta(a-E_B^D(a))\| \\
 &=\|E_C^D((a-E_B^D(a))^*(a-E_B^D(a)))\|^{1/2} \\
 &\ge \frac{1}{c}\|(a-E_B^D(a))^*(a-E_B^D(a))\|^{1/2} 
 =\frac{1}{c}\|a-E_B^D(a)\|.
\end{align*}
Therefore for any $a\in A_1$, we can find $b := E_B^D(a) \in B_1$ 
such that $\|a-b\| \leq c\|e_A-e_B\|$. By a symmmetric argument, we 
have that $d(A,B)\le c \|e_A-e_B\|$. 
\end{proof}

\section{Perturbations}

We begin with  some elementary estimations. 

\begin{lemma}\label{lem:multiplicativity}
Let $A$ and $D$ be $\mathrm{C}^*$-algebras. Let $\varphi: A \rightarrow D$ be 
a contractive positive map and $\psi: A \rightarrow D$ be 
a $*$-homomorphism. Then for any $x,y \in A$ 
\[
\|\varphi(xy) - \varphi(x)\varphi(y)\| \leq 3\|\varphi- \psi\|\|x\|\|y\|.
\]
\end{lemma}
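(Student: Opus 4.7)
The plan is to prove this by a standard three-term telescoping argument, exploiting the multiplicativity of $\psi$ to split $\varphi(xy) - \varphi(x)\varphi(y)$ into pieces each of which can be absorbed into $\|\varphi - \psi\|$.

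First I would insert $\psi(xy) = \psi(x)\psi(y)$ into the difference and rewrite it as
\[
\varphi(xy) - \varphi(x)\varphi(y) = \bigl(\varphi(xy) - \psi(xy)\bigr) + \bigl(\psi(x) - \varphi(x)\bigr)\psi(y) + \varphi(x)\bigl(\psi(y) - \varphi(y)\bigr).
\]
Then I would take norms and bound each of the three terms separately. The first term is at most $\|\varphi - \psi\|\,\|xy\| \leq \|\varphi - \psi\|\,\|x\|\,\|y\|$. For the second term I use that $\psi$, being a $*$-homomorphism, is contractive, so $\|\psi(y)\| \leq \|y\|$, giving the bound $\|\varphi - \psi\|\,\|x\|\,\|y\|$. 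For the third term I use that $\varphi$ is contractive by hypothesis, so $\|\varphi(x)\| \leq \|x\|$, again giving $\|\varphi - \psi\|\,\|x\|\,\|y\|$. Summing the three contributions yields the claimed factor of $3$.

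There is no real obstacle here: the only facts needed are submultiplicativity of the norm, contractivity of $\varphi$ (assumed), and contractivity of the $*$-homomorphism $\psi$ (automatic). Positivity of $\varphi$ is not actually used in this particular estimate, though it presumably matters for later applications. I would present the inequality as a one-display computation followed by the three bounds.
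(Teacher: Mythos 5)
Your telescoping argument is correct and is exactly what the paper's one-line proof (``Approximate $\varphi$ by $\psi$'') intends; the three bounds and the resulting factor of $3$ all check out. Nothing further is needed.
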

\begin{proof}
Approximate $\varphi$ by $\psi$. 
\end{proof}

\begin{lemma}\label{lem:E and i}
Let $D$ be a  $\mathrm{C}^*$-algebra and $A,B$ be  $\mathrm{C}^*$-subalgebras of $D$. 
Let $E_B: D \rightarrow B$ be a conditional expectation. 
Consider the restriction map 
$E_B|_A: A  \rightarrow D$ and an inclusion map 
$\iota_A: A \rightarrow D$. Then we have
\[
\| E_B|_A - \iota_A\| \leq 2d(A,B), 
\]
and for any $x,y \in A$,
\[
\| E_B(xy) - E_B(x)E_B(y)\| \leq 6d(A,B)\|x\|\|y\|.
\]
\end{lemma}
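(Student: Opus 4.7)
The plan is to prove the two estimates in order, using the first to deduce the second via \lemref{lem:multiplicativity}.

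For the first inequality, I would argue pointwise on the unit ball of $A$. Given any $a \in A_1$ and any $\varepsilon > 0$, the definition of $d(A,B)$ provides some $b \in B_1$ with $\|a - b\| < d(A,B) + \varepsilon$. Since $b \in B$, we have $E_B(b) = b$, and since $E_B$ is a contraction we get
\[
\|E_B(a) - a\| \le \|E_B(a) - E_B(b)\| + \|b - a\| \le 2\|a - b\| < 2\bigl(d(A,B) + \varepsilon\bigr).
\]
Letting $\varepsilon \to 0$ and then taking the supremum over $a \in A_1$ yields $\|E_B|_A - \iota_A\| \le 2\,d(A,B)$.

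For the second inequality, I would view $\varphi := E_B|_A$ as a contractive positive map from $A$ into $D$, and $\psi := \iota_A$ as a $*$-homomorphism from $A$ into $D$. Both have the same domain and codomain, so \lemref{lem:multiplicativity} applies and gives, for every $x,y \in A$,
\[
\|E_B(xy) - E_B(x)E_B(y)\| = \|\varphi(xy) - \varphi(x)\varphi(y)\| \le 3\|\varphi - \psi\|\,\|x\|\,\|y\|.
\]
Inserting the bound $\|\varphi - \psi\| \le 2\,d(A,B)$ from the first part produces the constant $6$ and finishes the proof.

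There is essentially no obstacle: the only non-tautological ingredient is the fact that $E_B$ acts as the identity on $B$, which is what converts the Hausdorff-type bound into an operator-norm bound between $E_B|_A$ and $\iota_A$; everything else is a direct application of \lemref{lem:multiplicativity}. The only small care point is that the infimum in the Hausdorff distance need not be attained, which is why I introduce the $\varepsilon$ and pass to the limit.
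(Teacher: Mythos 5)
Your proposal is correct and follows exactly the paper's route: the same $\varepsilon$-approximation argument with $E_B(b)=b$ for the first bound, and then an application of \lemref{lem:multiplicativity} with $\varphi=E_B|_A$ and $\psi=\iota_A$ for the second (which is what the paper's ``the rest is clear'' refers to). Nothing is missing.
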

\begin{proof}
For any $\epsilon > 0$ and $a \in A_1$, there exists $b \in B_1$ such 
that $\|a-b\| \leq d(A,B) + \varepsilon/2$. Then
\[
\|E_B(a) -\iota_A(a) \| \leq \|E_B(a-b)\| + \|a-b\| \leq 2d(A,B) + \varepsilon.
\]
Hence $\| E_B|_A - \iota_A\| \leq 2d(A,B)$. The rest is clear. 
\end{proof}

We shall show  that two close intermediate C$^*$-subalgebras 
$A,B\in \mathrm{IMS}(C,D,E_C^D)$ are unitarily equivalent. 
We need the following two key lemmas:  

\begin{lemma}
Let $D$ be a $\mathrm{C}^*$-algebra and $C$ a $\mathrm{C}^*$-subalgebra of $D$ with a common unit. Let $E_C^D:D\to C$ be a conditional expectation of finite index
 with a basis $\{u_1,\dots,u_N\}$ in $ D_1$. 
For any $A,B\in\mathrm{IMS}(C,D,E_C^D)$, if $d(A,B)<(24N^2)^{-1}$, 
then there exists a unital $*$-homomorphism $\psi:A\to B$ 
such that $\psi|_C=id_C$ and
\[ \|E_B^D|_A-\psi\|\le 8\sqrt{3}N\sqrt{d(A,B)}. \] 
\end{lemma}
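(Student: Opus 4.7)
The plan is to build $\psi$ as a perturbation of $\phi := E_B^D|_A$ via a polar-decomposition-style argument inside the basic construction. By Lemma~\ref{lem:E and i}, $\phi$ is a unital, completely positive, $C$-bimodular map from $A$ to $B$ satisfying $\|\phi(xy)-\phi(x)\phi(y)\| \le 6d(A,B)\|x\|\|y\|$ and $\|\phi(a)-a\| \le 2d(A,B)\|a\|$, so $\phi$ already resembles a $*$-homomorphism and the task is to remove the multiplicative defect while staying close to $\phi$.

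First I would pass to the basic construction $M_1 = C^*\langle D, e_C\rangle$ associated with $E_C^D$, where the Jones projections $e_A,e_B \in M_1$ for the expectations $E_A^D$ and $E_B^D$ admit closed-form expressions in terms of the induced bases $v_i := E_A^D(u_i) \in A_1$ (a basis for $E_C^A$) and $w_i := E_B^D(u_i) \in B_1$ (a basis for $E_C^B$). Namely $e_A = \sum_i \lambda(v_i)\,e_C\,\lambda(v_i^*)$, and a direct computation using the basis identities shows
\[
e_B e_A = \sum_{i=1}^N \lambda(\phi(v_i))\,e_C\,\lambda(v_i^*),
\]
so that $e_Be_A$ cleanly encodes $\phi$. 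Using the Pimsner--Popa inequality for $E_C^D$ together with $\|\mathrm{Index}\,E_C^D\| \le N$ (because $u_i \in D_1$) and the estimate $\|\phi(a)-a\| \le 2d(A,B)$ on $A_1$, I would obtain $\|(1-e_B)e_A\| \le 2\sqrt{N}\,d(A,B)$, and by symmetry $\|e_A-e_B\| \le 2\sqrt{N}\,d(A,B)$. Under the hypothesis $d(A,B) < (24N^2)^{-1}$ this is well below $1/2$, so Lemma~\ref{lem:estimates}(3) produces a unitary $w \in M_1$ with $we_Aw^* = e_B$ and $\|w-1\| \le \sqrt{2}\|e_A-e_B\|$.

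Next, I would define $\psi: A \to B$ via a compression by $w$: for $a \in A$, declare $\lambda(\psi(a))\,e_B := w\lambda(a)w^*\,e_B$. The right-hand side commutes with $e_B$ (using $we_A = e_Bw$ and $[\lambda(a),e_A]=0$), so it lies in the corner $e_BM_1e_B$. The compatibility $E_C^B \circ E_B^D = E_C^D$ built into $\mathrm{IMS}(C,D,E_C^D)$ identifies this corner with the basic construction for $C \subset B$, and the further identification of $w\lambda(a)w^*e_B$ with $\lambda(\psi(a))e_B$ for a unique $\psi(a) \in B$ is the technical heart of the argument. Multiplicativity of $\psi$ is then immediate from the unitarity of $w$, and $\psi|_C = id_C$ because $w$ commutes with $\lambda(C)$ (both Jones projections do). The final bound $\|\phi-\psi\| \le 8\sqrt{3}N\sqrt{d(A,B)}$ would be obtained by propagating $\|w-1\| \le 2\sqrt{2N}\,d(A,B)$ through the compression formula, with the square root on $d(A,B)$ arising from the conversion (via Pimsner--Popa) between the Hilbert-module norm on $\mathcal{E}$ and the operator norm on $D$.

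The main obstacle is verifying that $w\lambda(a)w^*e_B$ genuinely corresponds to an element of $\lambda(B)$, rather than to a more general element of $e_BM_1e_B$. This step will require using the full $\mathrm{IMS}(C,D,E_C^D)$ structure (in particular the compatibility of the conditional expectations) and the fact that $w$ is built from $e_A$ and $e_B$ in a way that preserves the $\lambda(D)$-part of $M_1$; without this control, the construction would only yield a $*$-homomorphism into the basic construction for $C \subset B$, not into $B$ itself.
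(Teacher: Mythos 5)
Your overall architecture (pass to a basic construction, produce a unitary $w$ close to $1$ that intertwines the relevant projections, then compress by $e_B$ to define $\psi$) matches the paper's, and several of your individual steps are sound: the formula $e_Be_A=\sum_i\lambda(\phi(v_i))e_C\lambda(v_i^*)$, the Pimsner--Popa estimate $\|e_A-e_B\|\le 2\sqrt{N}\,d(A,B)$, the multiplicativity of $a\mapsto e_Bw\lambda(a)w^*e_B$ from $we_Aw^*=e_B$ and $[\lambda(a),e_A]=0$, and $\psi|_C=\mathrm{id}_C$ from $[\lambda(C),e_A]=[\lambda(C),e_B]=0$. The problem is precisely the step you yourself flag as ``the technical heart'' and leave unresolved, and with your choice of $w$ it is not merely unproved but false in general. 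Your $w$ is produced by Lemma~\ref{lem:estimates}(3) from the pair $(e_A,e_B)$, so it lives in $C^*(e_A,e_B,I)$. The $e_B$-corner of that algebra already contains $e_Be_Ae_B$, which acts by $\eta(x)\mapsto\eta(E_B^D(E_A^D(E_B^D(x))))$; if this were of the form $\lambda(b)e_B$ one would get $E_B^D\circ E_A^D|_B=\mathrm{id}_B$, which fails whenever $A\ne D$. More generally $e_BM_1e_B$ for $M_1=C^*\langle D,e_C\rangle$ is the basic construction $C^*\langle B,e_C\rangle$ (since $e_B\lambda(x)e_C\lambda(y)e_B=\lambda(E_B^D(x))e_C\lambda(E_B^D(y))$), which is strictly larger than $\lambda(B)e_B$. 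So your compression only yields a $*$-homomorphism into $C^*\langle B,e_C\rangle$, exactly the failure mode you anticipated, and no amount of ``compatibility of expectations'' rescues it for this particular $w$.

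The paper's proof avoids this by building the intertwiner from a different operator: $t=\sum_i\lambda((\mathrm{Index}\,E_C^A)^{-1}v_i)\,e_B\,\lambda(v_i^*)$, which by construction lies in the linear span of $\lambda(A)e_B\lambda(A)$ \emph{and} commutes with $\lambda(A)$ (by the basis identity $av_i=\sum_j v_jE_C^A(v_j^*av_i)$). One then shows $\|t-e_B\|\le N\sqrt{6d(A,B)}<1/2$ (this is where the $\sqrt{d(A,B)}$ enters, via $\|e_B\lambda(v_i)(I-e_B)\lambda(v_i^*)e_B\|=\|E_B^D(v_iv_i^*)-E_B^D(v_i)E_B^D(v_i^*)\|\le 6d(A,B)$), takes the spectral projection $q$ of $(t+t^*)/2$, and obtains $w\in C^*(t,e_B,I)\subset C^*(\lambda(A),e_B,I)$. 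Membership in $C^*(\lambda(A),e_B,I)$ is the decisive property your $w$ lacks: for such $w$ the covariance relation $e_B\lambda(x)e_B=\lambda(E_B^D(x))e_B$ collapses $e_Bw^*\lambda(a)we_B$ into $\lambda(B)e_B\cong B$. If you want to salvage your route, you must replace the unitary coming from the projection pair $(e_A,e_B)$ by one manufactured inside $C^*(\lambda(A),e_B,I)$ (or at least inside $C^*\langle D,e_B\rangle+\mathbb{C}I$) that still commutes appropriately with $\lambda(A)$; that is essentially the paper's construction.
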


\begin{proof}
Let $\mathcal{E}$ be the Hilbert $B$-module completion of $D$ 
using $E_B^D$ 
and $\eta$ the natural inclusion map from $D$ to $\mathcal{E}$.
 Define an injective $*$-homomorphism $\lambda:D\to \mathcal{L}_C(\mathcal{E})$ by 
\[ 
\lambda(d)\eta(x)=\eta(dx) \ , \ \ d,x\in D. 
\]
Then for any $b \in B$, we have  $\lambda(b)e_B = e_B\lambda(b)$.  
The map $B \ni b \mapsto \lambda(b)e_B\in \mathcal{L}_C(\mathcal{E})$ is 
a injective $*$-homomorphism and 
$\|\lambda(b)e_B\| = \|\lambda(b)\|$. 
For any $z \in D$, we have $e_B\lambda(z)e_B = E_B^D(z)e_B$. 
Hence 
for any $x,y \in D$, 
\begin{align*}
& \|E_B^D(xy) -E_B^D(x)E_B^D(y)\| 
 = \|\lambda(E_B^D(xy) -E_B^D(x)E_B^D(y))e_B\|  \\
& =\|\lambda(E_B^D(xy))e_B - \lambda(E_B^D(x))e_B\lambda(E_B^D(y))e_B\| \\
& =\| e_B\lambda(xy)e_B - e_B\lambda(x)e_B\lambda(y)e_B\|
= \| e_B\lambda(x)(I - e_B)\lambda(y)e_B \|. 
\end{align*}
Let $E_C^A$ be a restriction of $E_C^D$ to $A$  and put $v_i=E_A^D(u_i)$.
Then $\{v_1,\dots,v_N\}$ is a basis for $E_C^A$ in $A_1$. Define an operator 
\[ 
t=\sum_{i=1}^N \lambda((\mathrm{Index}\,E_C^A)^{-1}) \lambda(v_i)e_B\lambda(v_i^*) \in \mathcal{L}_C(\mathcal{E}). 
\]
Recall that $\mathrm{Index}\,E_C^A$ is in the center of $A$. 
For any $a\in A$ 

\begin{align*}
\lambda(a)t &=\sum_{i=1}^N \lambda(( \mathrm{Index}\, E_C^A)^{-1})\lambda(av_i)e_B\lambda(v_i^*) \\
 &=\sum_{i=1}^N\sum_{j=1}^N \lambda(( \mathrm{Index}\,E_C^A)^{-1}) \lambda(v_j E_C^A(v_j^*av_i))e_B\lambda(v_i^*) \\
 &=\sum_{j=1}^N \lambda(( \mathrm{Index}\, E_C^A)^{-1}) \lambda(v_j)e_B \sum_{i=1}^N \lambda(E_C^A(v_j^*av_i)v_i^*) \\
 &=\sum_{j=1}^N \lambda(( \mathrm{Index}\, E_C^A)^{-1})\lambda(v_j)e_B\lambda(v_j^*)\lambda(a)=t\lambda(a).   
\end{align*}
Since $\mathrm{Index}\, E_C^A = \sum_iv_iv_i^*$ and Lemma \ref{lem:E and i}, 
\begin{align*}
\|t-e_B\| &=\left\|t-\sum_{i=1}^N \lambda((\mathrm{Index}\, E_C^A)^{-1})\lambda(v_i)\lambda(v_i^*)e_B\right\| \\
 &\le\sum_{i=1}^N \|\lambda(( \mathrm{Index}\, E_C^A)^{-1})\| \|\lambda(v_i)\| \|e_B\lambda(v_i^*)-\lambda(v_i^*)e_B\| \\
 &\le N\|e_B\lambda(v_i^*)-\lambda(v_i^*)e_B\| \\
 &= N\|(I-e_B)\lambda(v_i^*)e_B-e_B\lambda(v_i^*)(I-e_B)\| \\
 &=N\cdot\max\left\{ \|(I-e_B)\lambda(v_i^*)e_B\| \ , \ \|e_B\lambda(v_i^*)(I-e_B)\| \right\} \\
 &=N\cdot \max
\left\{ \|e_B\lambda(v_i)(I-e_B)\lambda(v_i^*)e_B\|^{1/2}  \ , \ \|e_B\lambda(v_i^*)(I-e_B)\lambda(v_i)e_B\|^{1/2} \right\} \\
 &=N\cdot \max \left\{\|E_B^D(v_iv_i^*)-E_B^D(v_i)E_B^D(v_i^*)\|^{1/2},  \ 
\|E_B^D(v_i^*v_i)-E_B^D(v_i^*)E_B^D(v_i)\|^{1/2} \right\}   \\ 
 &\le N\sqrt{6d(A,B)}<\frac{1}{2}. 
\end{align*}

Put  $\delta = \|t-e_B\| = \|(t + t^*)/2 -e_B\|$. 
Let $q=\chi_{[1-\delta, 1 + \delta]}((t+t^*)/2)$.  
By Lemma \ref{lem:estimates}, 
$q$ is a projection in $C^*(t,I_{\mathcal{E}})$ and  
commutes with $\lambda (a)$ for any $a \in A$,  and  
$\|q-e_B\|\le2\|t-e_B\|<1$. Therefore  there exists a unitary
 $w\in C^*(t, e_B, I_{\mathcal{E}})\subset \mathcal{L}_B(\mathcal{E})$ such that $we_Bw^*=q$ and $\|w-1_{\mathcal{E}}\|\le \sqrt{2}\|q-e_B\|$.  
Define $\psi^{\prime}:A\to \mathcal{L}_B(\mathcal{E})$ by 
\[ 
\psi^{\prime}(a)= w^*q\lambda(a)qw = e_Bw^*\lambda(a)we_B \ , \ \ a\in A .
\] 
Since the projection $q$ commutes with $\lambda (a)$, it is clear that 
$\psi^{\prime}$ is a $*$-homomorphism. 

The unitary  $w$ is  in $C^*(t, e_B, I_{\mathcal{E}})\subset C^*(\lambda(A), e_B, I_\mathcal{E})$ , and $e_Bw^*\lambda(a)we_B\in \lambda(B)e_B$ for $a\in A$. Therefore, $\psi^{\prime}(A)\subset \lambda(B)e_B\subset \mathcal{L}_B(\mathcal{E})$. Let $\mathcal{E}^{\prime}$ be a closure of $\eta(B)$ in $\mathcal{E}$. Define an injective $*$-homomorphism $\lambda^{\prime}:B\to \mathcal{L}_B(\mathcal{E}^{\prime})$ by
\[  \lambda^{\prime}(b)\eta(x)=\eta(bx) \ , \ \ b,x\in B, \]
and a surjective $*$-isomorphism $\iota:\lambda(B)e_B\to \lambda^{\prime}(B)$ by
\[ \iota(\lambda(b)e_B)=\lambda^{\prime}(b) \ , \ \ b\in B. \]
Then $\lambda(B)e_B$ is isomorphic to $\lambda^{\prime}(B)$. Thus, 
we can define a $*$-homomorphism $\psi=(\lambda^{\prime})^{-1}\circ \iota \circ\psi^{\prime}:A\to B$, that is,  
$\lambda(\psi(a))e_B = \psi^{\prime}(a)$.   Then for any contraction $a \in A$, 
\begin{align*}
\|E_B^D|_A(a)- \psi(a)\|
 & = \|e_B(\lambda(E_B^D|_A(a)- \psi(a)))e_B\| 
= \|e_B\lambda(E_B^D(a))e_B- \psi^{\prime}(a)\| \\
& = \| e_B\lambda(a)e_B  -  e_Bw^*\lambda(a)we_B \| \\
& = \| e_B\lambda(a)(I - w^*)e_B  + e_B(I - w^* )\lambda(a)we_B \|
\leq 2\|w-I\|. 
\end{align*}
Therefore 
\[ 
\|E_B^D|_A- \psi\|\le 2\|w-I\| \le 8\sqrt{3}N\sqrt{d(A,B)}. 
\]

Since $w$ is contained in $C^*(t, e_B ,I_{\mathcal{E}})$, $C \subset A$ 
and $C \subset B$, we have that  
$\lambda(c)w=w\lambda(c)$ for $c \in C$. Hence 
\[
\psi^{\prime}(c)= e_Bw^*\lambda(c)we_B = e_B\lambda(c)e_B = \lambda(c)e_B
\]
Therefore  $\psi(c) = c = id_C(c)$ for any $c \in C$. 
\end{proof}

Next Lemma shows how to find an intertwiner for close $*$-homomorphisms. 

\begin{lemma}
Let $D$ be a $\mathrm{C}^*$-algebra and $C$ a $\mathrm{C}^*$-subalgebra of $D$ with a common unit and $E_C^D:D\to C$ be a conditional expectation of finite index with a basis $\{u_1,\dots,u_N\}$ in $D_1$.  For any $A\in\mathrm{IMS}(C,D,E_C^D)$, If $\phi_1,\phi_2:A\to D$ are unital $*$-homomorphisms such that
 $\phi_1|_C=id_C=\phi_2|_C$ and $\|\phi_1-\phi_2\| <1/N$, then there exists a unitary $u\in D$ such 
that $\phi_1=Ad(u)\circ \phi_2$ and $\|u-I_D\| < \sqrt{2}N\|\phi_1-\phi_2\|$. 
\end{lemma}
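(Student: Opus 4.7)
The plan is to build an intertwiner by a weighted averaging of $\phi_1$ and $\phi_2$ against the basis for $E_C^A$, then use polar decomposition in the spirit of Lemma \ref{lem:estimates}(1). Concretely, set $v_i=E_A^D(u_i)\in A_1$, so that $\{v_1,\dots,v_N\}$ is a basis for $E_C^A$ (as already established just before Lemma~\ref{lem:Jones projection metric}), and let $L=\mathrm{Index}\,E_C^A=\sum_i v_iv_i^*$. Two ingredients from the paper will be used crucially: $L$ lies in the center of $A$, and $L\ge I$, so $\|\phi_1(L^{-1})\|\le\|L^{-1}\|\le 1$. Now define
\[
x\;=\;\phi_1(L^{-1})\sum_{i=1}^N\phi_1(v_i)\,\phi_2(v_i^*)\ \in D.
\]

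First I would verify the intertwining identity $\phi_1(a)x=x\phi_2(a)$ for every $a\in A$. Expanding $av_i=\sum_j v_jE_C^A(v_j^*av_i)$ and using $\phi_1|_C=\phi_2|_C=\mathrm{id}_C$ (so $\phi_1$ and $\phi_2$ agree on the $C$-valued coefficients $E_C^A(v_j^*av_i)$), the computation reproduces the one in the previous lemma: one pulls $\phi_1$ of the coefficients across into $\phi_2$, reassembles via $\sum_iE_C^A(v_j^*av_i)v_i^*=v_j^*a$, and collects $\phi_2(v_j^*a)=\phi_2(v_j^*)\phi_2(a)$. Then, by comparing with $1=\phi_1(L^{-1})\sum_i\phi_1(v_i)\phi_1(v_i^*)$,
\[
x-1\;=\;\phi_1(L^{-1})\sum_{i=1}^N\phi_1(v_i)\bigl(\phi_2(v_i^*)-\phi_1(v_i^*)\bigr),
\]
which gives the clean estimate $\|x-1\|\le 1\cdot N\cdot\|\phi_1-\phi_2\|\cdot 1=N\|\phi_1-\phi_2\|<1$ using $\|\phi_j(v_i)\|\le 1$ and $\|\phi_1(L^{-1})\|\le 1$.

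Finally, since $\|x-1\|<1$, $x$ is invertible in $D$ and the polar decomposition $x=u|x|$ produces a genuine unitary $u\in D$; Lemma~\ref{lem:estimates}(1) yields $\|u-I_D\|\le\sqrt{2}\|x-1\|\le\sqrt{2}N\|\phi_1-\phi_2\|$. To pass the intertwining from $x$ to $u$, take adjoints of $\phi_1(a)x=x\phi_2(a)$ to get $x^*\phi_1(a)=\phi_2(a)x^*$, whence $x^*x\phi_2(a)=\phi_2(a)x^*x$; thus $|x|=(x^*x)^{1/2}$ commutes with $\phi_2(A)$, and $\phi_1(a)u|x|=u\phi_2(a)|x|$ cancels the invertible $|x|$ on the right, giving $\phi_1=\mathrm{Ad}(u)\circ\phi_2$. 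The only delicate step is choosing the averaging operator symmetrically enough that the $N\|\phi_1-\phi_2\|$ bound appears without any stray $\|L\|$ or $\|L^{-1}\|$ factor beyond $1$; the insertion of $\phi_1(L^{-1})$ on the \emph{left} together with $L\ge I$ is exactly what makes this work.
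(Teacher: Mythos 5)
Your proposal is correct and follows essentially the same route as the paper: the operator $x=\phi_1(L^{-1})\sum_i\phi_1(v_i)\phi_2(v_i^*)$ is exactly the paper's intertwiner $s$, and the intertwining computation, the estimate $\|x-1\|\le N\|\phi_1-\phi_2\|$, and the passage to the unitary via polar decomposition all match the paper's argument.
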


\begin{proof}
Let $E_C^A$ be the estriction of $E_C^D$ to $A$ and $v_i=E_A^D(u_i)$. Put
\[ 
s=\sum_{i=1}^N \phi_1((\mathrm{Index}\,E_C^A)^{-1})\phi_1(v_i)\phi_2(v_i^*) .
\]
Since $\|(\mathrm{Index}\,E_C^A)^{-1}\|\le1$, 
\begin{align*}
\|s-I_D\|&=\left\| s- \sum_{i=1}^N \phi_1((\mathrm{Index}\,E_C^A)^{-1})\phi_1(v_i)\phi_1(v_i^*) \right\| \\
 &\le\sum_{i=1}^N \|\phi_1((\mathrm{Index}\, E_C^A)^{-1})\phi_1(v_i)\| \| \phi_2(v_i^*)-\phi_1(v_i^*)\| \\
 &\le N\|\phi_1-\phi_2\|<1.
\end{align*}
Therefore the unitary $u$ in the polar decomposition $s=u|s|$ lies in $D$ and satisfies $\|u-I_D\|\le \sqrt{2} N\|\phi_1-\phi_2\|$ by Lemma \ref{lem:estimates}. Furthermore, for any 
$a\in A$, 
\begin{align*}
\phi_1(a)s &=\sum_{i=1}^N \phi_1((\mathrm{Index}\,E_C^A)^{-1})\phi_1(av_i)\phi_2(v_i^*) \\
 &=\sum_{i=1}^N \sum_{j=1}^N \phi_1((\mathrm{Index}\,E_C^A)^{-1})\phi_1(v_j E_C^A(v_j^*av_i))\phi_2(v_i^*) \\
 &=\sum_{i=1}^N\sum_{j=1}^N \phi_1((\mathrm{Index}\,E_C^A)^{-1})\phi_1(v_j)E_C^A(v_j^*av_i)\phi_2(v_i^*) \\
 &=\sum_{j=1}^N\sum_{i=1}^N \phi_1((\mathrm{Index}\,E_C^A)^{-1})\phi_1(v_j)\phi_2(E_C^A(v_j^*av_i)v_i^*) \\
 &=\sum_{j=1}^N \phi_1((\mathrm{Index}\,E_C^A)^{-1})\phi_1(v_j)\phi_2(v_j^*a) =s\phi_2(a).
\end{align*}
Taking adjoints gives
\[ s^*\phi_1(a)=\phi_2(a)s^* \ , \ \ a\in A .\]
Therefore, 
\[ s^*s\phi_2(a)=s^*\phi_1(a)s=\phi_2(a)s^*s \ , \ \ a\in A .\]
Since $|s|\phi_2(a)=\phi_2(a)|s|$, $ \phi_1(a)u =u\phi_2(a)$. Therefore 
we have that  $\phi_1=Ad(u)\circ\phi_2$.
\end{proof}

The following proposition shows that sufficiently close 
intermediate subalgebras with conditional expectations are unitarily equvalent. 

\begin{prop}\label{proposition:perturbation}
Let $D$ be a $\mathrm{C}^*$-algebra and $C$ a $\mathrm{C}^*$-subalgebra of $D$ with a common unit and $E_C^D:D\to C$ a conditional expectation of finite index. Then, there exists a positive constant $\gamma$ satisfying the following: For any  $A,B\in\mathrm{IMS}(C,D,E_C^D)$, if $d(A,B)<\gamma$, then there exists a unitary $u\in C^*(A,B)$ such that $uAu^*=B$. We can choose the unitary in the relative commutant 
$C' \cap D$. 
\end{prop}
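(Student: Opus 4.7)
The strategy is to chain the two key lemmas just proved. First, apply the first key lemma: once $d(A,B)<(24N^2)^{-1}$ we obtain a unital $*$-homomorphism $\psi\colon A\to B$ with $\psi|_C=\mathrm{id}_C$ and $\|E_B^D|_A-\psi\|\le 8\sqrt 3\,N\sqrt{d(A,B)}$. Combining with the bound $\|E_B^D|_A-\iota_A\|\le 2d(A,B)$ from Lemma~\ref{lem:E and i} yields
\[
\|\iota_A-\psi\|\le 2d(A,B)+8\sqrt 3\,N\sqrt{d(A,B)},
\]
and shrinking $\gamma$ forces this quantity below $1/N$. The second key lemma, applied to the unital $*$-homomorphisms $\phi_1=\iota_A$ and $\phi_2=\psi$ (both restricting to $\mathrm{id}_C$), then produces a unitary $u\in D$ with $\iota_A=\mathrm{Ad}(u)\circ\psi$ and $\|u-I_D\|\le\sqrt 2\,N\|\iota_A-\psi\|$, in particular with $\|u-I_D\|$ as small as we wish.

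Next I extract the conjugacy. The intertwining $a=u\psi(a)u^{*}$ rewrites as $\psi(a)=u^{*}au$, so $u^{*}Au=\psi(A)\subseteq B$. Using $u^{*}au-a=(u^{*}-I)au+a(u-I)$ one sees $d(u^{*}Au,A)\le 2\|u-I_D\|$, so by the triangle inequality
\[
d(u^{*}Au,B)\le 2\|u-I_D\|+d(A,B).
\]
A further tightening of $\gamma$ pushes this below $1$, and Lemma~\ref{lem:<1} promotes the inclusion $u^{*}Au\subseteq B$ to the equality $u^{*}Au=B$. Setting $w:=u^{*}$ delivers the required $wAw^{*}=B$.

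Finally I verify the structural claims about $u$, and hence about $w$. Inspecting the construction in the second key lemma, $u$ arises from the polar decomposition of
\[
s=\sum_{i=1}^{N}(\mathrm{Index}\,E_C^A)^{-1}\,v_i\,\psi(v_i^{*}),
\]
where $v_i=E_A^D(u_i)\in A_1$, $\psi(v_i^{*})\in B$, and $(\mathrm{Index}\,E_C^A)^{-1}\in A$; hence $s\in C^{*}(A,B)$, and since $s$ is invertible, so is $|s|$ and $u=s|s|^{-1}\in C^{*}(A,B)$. For the relative-commutant statement, the intertwining $\phi_1(a)s=s\phi_2(a)$ specialises at $a=c\in C\subseteq A$ (where $\phi_1(c)=c=\phi_2(c)$) to $cs=sc$, so $s\in C'\cap D$ and therefore $u,w\in C'\cap D$. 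The only nontrivial aspect is calibrating $\gamma$ so that the hypothesis of the first key lemma, the $1/N$-gap required by the second, and the sub-unit gap required by Lemma~\ref{lem:<1} all hold simultaneously, but this is pure bookkeeping and presents no conceptual obstacle.
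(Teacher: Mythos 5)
Your proposal is correct and follows essentially the same route as the paper: chain the two key lemmas (first to produce $\psi\colon A\to B$ close to $E_B^D|_A$, hence to $\iota_A$ via Lemma~\ref{lem:E and i}; then to produce an intertwining unitary), and finish with Lemma~\ref{lem:<1} to upgrade the inclusion to equality. The only differences are cosmetic — you swap the roles of $\phi_1,\phi_2$ so your unitary is the adjoint of the paper's, you bound $d(u^*Au,B)$ by a triangle inequality through $A$ rather than estimating $\|b-uau^*\|$ directly, and you spell out the (correct) verification that $s$, hence $u$, lies in $C^*(A,B)\cap C'$, which the paper leaves implicit.
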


\begin{proof}
Let $N$ be  the number of  a finite basis for $E_C^D$ in $D_1$. 
Put $\gamma:= (10N)^{-4}$. By Lemma 3.3, there exists a unital $*$-homomorphism $\psi:A\to B$ such that $\psi|_C=id_C$ and 
\[ \|E_B^D|_A-\psi\|\le 8\sqrt{3}N\sqrt{d(A,B)}<8\sqrt{3}(100N)^{-1} .\]
Since 
\[ \|\psi-id_A\|\le\|\psi-E_B^D|_A\|+\|E_B^D|_A-id_A\| < 8\sqrt{3}(100N)^{-1}+2(10N)^{-4}< \frac{1}{N}  \]
by Lemma 3.2, there exists a unitary $u\in C^*(A,B)$ such that $\psi=Ad(u)$ and $\|u-1_D\|\le \sqrt{2}N(8\sqrt{3}(100N)^{-1}+2(10N)^{-4})$ by Lemma 3.4. That is $uAu^*\subset B$. 
For any $b\in  B_1$, there exists $a\in A_1$ such that $\|b-a\|\leq \gamma$ by $d(A,B)<\gamma$. Then
\begin{align*}
\|b-uau^*\| &\le\|b-a\|+\|a-uau^*\| \\
 &\le\gamma +2\|a\| \|u-1_D\| \\
 &\le (10N)^{-4} + 2\sqrt{2}N ( 8\sqrt{3}(100N)^{-1}+2(10N)^{-4} ) \\
 &\le (4\sqrt{2}+1)(10)^{-4} + 16\sqrt{6}(10)^{-2} <1 ;
\end{align*}
therefore, $d(uAu^*,B)<1$.  By Lemma \ref{lem:<1}, we have that $uAu^*=B$.
\end{proof}

The following is the main theorem of the paper. 
Thanks to Izumi's result in \cite{Izumi}, we do not need to 
assume the existence of the conditional expectations onto intermediate C$^*$-subalgebras 
apriori. We also need the notion minimality of conditional expecations 
in \cite{Ka-Wa}. Let $D$ be a simple C$^*$-algebra and $C$ a simple C$^*$-subalgebra of $D$ with a common unit and $E_C^D:D\to C$ a conditional expectation of finite index. Then there there exists a unique minimal conditional 
expectation $E_0: D\to C$, that is, Index$\, E_0 \leq $ Index$\,E$ for any 
conditional expectation $E:D\to C$ of finite index.

\begin{thm} \label{thm;simple}
Let $D$ be a simple $\mathrm{C}^*$-algebra and $C$ a simple $\mathrm{C}^*$-subalgebra of $D$ with a common unit and $E_C^D:D\to C$ a conditional expectation of finite index. 
Then, there exists a positive constant $\gamma$ satisfying the following: 
For any simple intermediate $\mathrm{C}^*$-subalgebras $A$ and $B$ for $C \subset D$,  
if  $d(A,B)<\gamma$, 
then there exists a unitary $u\in C^*(A,B)$ such that $uAu^*=B$. 
We can choose the unitary in the relative commutant 
$C' \cap D$. 

\end{thm}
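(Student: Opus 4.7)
The plan is to deduce the theorem from \propref{proposition:perturbation} by using the simplicity hypotheses to equip the intermediate subalgebras $A$ and $B$ with compatible conditional expectations that place them in a single $\mathrm{IMS}$ class; the perturbation machinery already developed then applies verbatim.

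The first step is to replace the given $E_C^D$ by the unique minimal conditional expectation $E_0 : D \to C$ from \cite{Ka-Wa}, and to fix a finite basis $\{u_1, \dots, u_N\} \subset D_1$ for $E_0$. Let $A$ be any simple intermediate $\mathrm{C}^*$-subalgebra. By Izumi's theorem in \cite{Izumi}, there exists a conditional expectation of finite index from $D$ onto $A$, so in particular the minimal one $E_A^D : D \to A$ exists. The tower $C \subset A \subset D$ is then a tower of simple inclusions of finite index, and the multiplicativity of the minimum index from \cite{Ka-Wa} shows that the composition of minimal expectations $(E_C^A)^{\min} \circ E_A^D$ is again a minimal $D \to C$ expectation, hence equals $E_0$ by uniqueness. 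Restricting this identity to $A$, on which $E_A^D$ acts as the identity, yields $(E_C^A)^{\min} = E_0|_A$, so the compatibility $E_0|_A \circ E_A^D = E_0$ holds, and $A \in \mathrm{IMS}(C, D, E_0)$. The same argument places $B$ in $\mathrm{IMS}(C, D, E_0)$.

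With $A$ and $B$ now in $\mathrm{IMS}(C, D, E_0)$, \propref{proposition:perturbation} applied to the reference expectation $E_0$ furnishes the constant $\gamma := (10N)^{-4}$ such that $d(A, B) < \gamma$ forces the existence of a unitary $u \in C^*(A, B) \cap (C' \cap D)$ with $uAu^* = B$. The constant depends only on $N$ and hence is uniform across all pairs of simple intermediate subalgebras. The main obstacle is concentrated in the middle paragraph: one must verify that the minimal conditional expectations compose correctly, so that simple intermediate subalgebras automatically belong to $\mathrm{IMS}(C, D, E_0)$. This is precisely where Izumi's existence theorem and the multiplicativity of the minimum index of \cite{Ka-Wa} are indispensable, and it explains why the simplicity hypotheses cannot be dropped in the current reduction.
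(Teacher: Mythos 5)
Your proposal is correct and follows essentially the same route as the paper: pass to the minimal expectation $E_0$, use Izumi's results to produce $E_A^D$ for a simple intermediate subalgebra, invoke the multiplicativity and uniqueness of the minimal index from \cite{Ka-Wa} to get the compatibility $E_0|_A\circ E_A^D=E_0$, and then apply \propref{proposition:perturbation} with $\gamma=(10N)^{-4}$ depending only on a basis for $E_0$. Your explicit observation that restricting the identity to $A$ gives $(E_C^A)^{\min}=E_0|_A$ makes the compatibility step slightly more transparent than the paper's phrasing, but the argument is the same.
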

\begin{proof} We may assume that $E_C^D:D\to C$ is 
a minimal conditional expectation of finite index by replacing the original 
conditional expectation by the minimal one, if necesary. Let $A$ be 
a simple interemediate C$^*$-subalgebra such that $C \subset A \subset D$. 
Since $E_C^D$ satisfies Pimsner-Popa inequality, so does the restriction 
$E_C^A : A \rightarrow C$.  By  Corollary 3.4 in Izumi \cite{Izumi}, 
$E_C^A$ also has a finite basis. 
Let $\{u_1,\cdots,u_N\}$ be a basis for $E_C^A$. 
Moreover there exists a 
conditional expectation $E_A^D: D \rightarrow A$ of finite 
index by Proposition 6.1 in Izumi \cite{Izumi}. 
In fact, we can define $E_A^D$ by 
\[ E_A^D(x)=\left( \mathrm{Index}\, E_C^A \right)^{-1} \sum_{i,j=1}^N u_iE_C^D(u_i^*xu_j)u_j^* \ , \ x\in D. \]
Replace 
$E_C^A$ and $E_A^D$ by minimal conditional expectations. 
Then the conposition $E_C^A \circ E_A^D$ is also a 
minimal conditional expectation by Theorem 3 in \cite{Ka-Wa}. 
Since the minimal conditional expectation is unique, 
$E_C^A \circ E_A^D = E_C^D$. Hence  we may assume that 
the conditional expectation $E_A^D$ satisfies the compatibility condition. 
Therefore Proposition \ref{proposition:perturbation} can be applied. 
\end{proof}

\begin{rem}\upshape
Even if  we do not assume that an intermediate C$^*$-subalgebra $A$ is not simple,  we can prove a similar fact. But the constant $\gamma$ above depends on the choice of $A$:

Let $D$ be a simple C$^*$-algebra and $C$ a simple C$^*$-subalgebra of $D$ with a common unit and $E_C^D:D\to C$ a conditional expectation of finite index.  
For any  intermediate C$^*$-subalgebras $A$,  
there exists a positive constant $\gamma$ satisfying the following: 
For any otheir intermediate C$^*$-subalgebras $B$ for  $C \subset D$,  
if  $d(A,B)<\gamma$, 
then there exists a unitary $u\in C^*(A,B)$ such that $uAu^*=B$. 
We can choose the unitary in the relative commutant 
$C' \cap D$. 
In fact, all we need is that the existance of the conditinal expectation $E^D_A$ and a finite basis for $E^A_C$.  
\end{rem}

In the case of subfactor theory of  Jones \cite{Jones}, 
the lattices of intermediate subfactors with  their finiteness 
were studied  in   Popa \cite{Po}, 
Watatani \cite{Watatani2}, Teruya-Watatani \cite{TW}, 
Longo \cite{Longo}, Khoshkam-Mashhood \cite{Kh-Ma}, Grossman-Jones \cite{G-J}, 
Grossman-Izumi \cite{G-I} and  Xu \cite{Xu} for example. In these study, 
the $\| \cdot \|_2$-perturbation technique of 
von Neumann algebras developed by 
Christensen \cite{Chris2} are essentially used or motivated.

Since we can choose the implementing unitary $u$ in the relative 
commutant, we immediately get the following finiteness of the 
intermediate subalgebras for both simple C$^*$-algebras and 
factors. A bound of the number is  obtained as in Longo \cite{Longo}. 

\begin{cor}
Let $D$ be a simple $\mathrm{C}^*$-algebra and $C$ a simple $\mathrm{C}^*$-subalgebra of $D$ with a common unit and $E_C^D:D\to C$ a conditional expectation of finite index. 
If the relative commutant $C' \cap D$ is trivial, then the number of 
intermediate $\mathrm{C}^*$-subalgebras is finite. 
\end{cor}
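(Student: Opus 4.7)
The plan is to combine Theorem~\ref{thm;simple} with a compactness argument for Jones projections in the basic construction.

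First, I would show that the set of intermediate C*-subalgebras is locally discrete in the Kadison-Kastler metric. By Theorem~\ref{thm;simple} (or its extension in the Remark to possibly non-simple intermediate subalgebras), for every intermediate C*-subalgebra $A$ there is a positive constant $\gamma_A$ such that any intermediate $B$ with $d(A,B)<\gamma_A$ satisfies $B=uAu^*$ for some unitary $u\in C'\cap D$. Under the hypothesis $C'\cap D=\mathbb{C}I$, such a $u$ must be a scalar, so $B=A$.

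Second, I would transfer the question to Jones projections in a single ambient C*-algebra. As in the proof of Theorem~\ref{thm;simple}, Izumi's result furnishes every intermediate C*-subalgebra $A$ with a compatible conditional expectation $E_A^D:D\to A$, and hence with a Jones projection $e_A$ in the basic construction $C^*\langle D,e_C\rangle\subset\mathcal{L}_C(\mathcal{E})$. By Lemma~\ref{lem:Jones projection metric},
\[
d(A,B)\le\|\mathrm{Index}\,E_C^D\|\,\|e_A-e_B\|,
\]
so local discreteness in the Kadison-Kastler metric lifts to local discreteness of the Jones projections in operator norm.

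Third, since $e_A\lambda(c)=\lambda(c)e_A$ for all $c\in C\subset A$, every $e_A$ lies in the relative commutant $\lambda(C)'\cap C^*\langle D,e_C\rangle$. Under finite index, a standard Pimsner-Popa expansion using a basis for $E_C^D$ shows that this relative commutant is finite-dimensional whenever $C'\cap D$ is, which is our case. The projection lattice of a finite-dimensional C*-algebra is norm-compact, and a locally discrete subset of a compact metric space is finite. A quantitative bound on the number of intermediate subalgebras along the lines of Longo~\cite{Longo} then follows.

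The main obstacle I anticipate is confirming the finite-dimensionality of $\lambda(C)'\cap C^*\langle D,e_C\rangle$: this is classical in the subfactor context, and in the present C*-setting it should follow from the Pimsner-Popa basis relations for $E_C^D$, although some care is required to carry out the dimensional estimate rigorously.
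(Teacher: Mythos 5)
Your overall strategy --- pass to the Jones projections $e_A$ in the relative commutant $C'\cap C^*\langle D,e_C\rangle$, use Lemma~\ref{lem:Jones projection metric} to control $d(A,B)$ by $\|e_A-e_B\|$, invoke Theorem~\ref{thm;simple} together with $C'\cap D=\mathbb{C}I$ to force $A=B$ when the projections are close, and finish by compactness of the projection set of a finite-dimensional algebra --- is exactly the paper's proof. The finite-dimensionality of $C'\cap C^*\langle D,e_C\rangle$, which you flag as the main obstacle, is not an issue: the paper simply cites Proposition 2.7.3 of \cite{Watatani1}, which gives it from finite index once $C'\cap D$ (a fortiori $C'\cap C$) is finite-dimensional.

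There is, however, one genuine gap in your concluding step: it is \emph{not} true that a locally discrete subset of a compact metric space is finite. The set $\{1/n : n\ge 1\}$ in $[0,1]$ is discrete --- each of its points has a neighbourhood meeting no other point of the set --- yet infinite. Discreteness with a separation constant $\gamma_A$ allowed to depend on $A$, which is all the Remark supplies for non-simple intermediate subalgebras and is how you phrase the first step, therefore does not yield finiteness. What makes the argument work, and what the paper actually uses, is that Theorem~\ref{thm;simple} provides a \emph{single} constant $\gamma>0$, depending only on a basis for $E_C^D$, valid for all pairs. Then any two distinct intermediate subalgebras satisfy $\gamma\le d(A,B)\le\|\mathrm{Index}\,E_C^D\|\,\|e_A-e_B\|$, i.e.\ the Jones projections are \emph{uniformly} separated by $\gamma/\|\mathrm{Index}\,E_C^D\|$; covering the norm-compact projection set by finitely many balls of half that radius, each ball contains at most one Jones projection, and finiteness follows. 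You should replace ``locally discrete'' by this uniform separation; as written, the hedge toward $A$-dependent constants invalidates the final compactness step.
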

\begin{proof} 
Since $E_C^D:D\to C$ is a conditional expectation of finite index , 
there exists a dual conditional expectation of finite index 
$E_D:C^*\langle D,e_C\rangle \to D$. 
Thus, $E_D\circ E_C^D:C^*\langle D,e_C\rangle \to C$ is a conditional expectation of finite index. 
Since the commutant $C' \cap C$  is finite-dimensional, 
the relative commutant 
$C' \cap C^*\langle D,e_C\rangle$ is also finite-dimensional by Proposition 2.7.3 
 in \cite{Watatani1}. 
Therefore the set 
\[
\mathcal P := \{p \in C^{\prime} \cap C^*\langle D,e_C\rangle\ |\  p \text{ \ is a projection } \}
\]  
is a compact Hausdorff space with respect to the operator norm topology. 
Let $N$ be  the number of  a finite basis for $E_C^D$ in $D_1$. 
Let $\varepsilon = {(2(10N)^4\|\mathrm{Index}\ E_C^D\|})^{-1} $.
By the compactness, there exists a finite open 
covering by $\varepsilon$-open balls. For any intermediate C$^*$-subalgebras 
$A$ and $B$, their Jones projection $e_A$ and $e_B$ are in 
$C' \cap C^*\langle D,e_C\rangle$ and satisfies 
\[ 
d(A,B)\le \|\mathrm{Index}\ E_C^D\| \|e_A-e_B\|
\] 
by Lemma \ref{lem:Jones projection metric}. If two Jones projection $e_A$ and $e_B$ are in one of these 
$\varepsilon$-open balls, then $d(A,B) < (10N)^{-4}$. 
By Theorem \ref{thm;simple}, 
there exists a unitary $u$ in $C^{\prime} \cap D$ such that 
$B = uAu^*$. Since $C' \cap D = {\mathbb C}I$, $u$ is a scalar. Therefore 
$B = A$. This shows that each $\varepsilon$-open ball of the cover contains 
at most one Jones projection for some intermediate C$^*$-subalgebra. 
This completes the proof. 
\end{proof}

Since any subfactor of a type II$_1$ factor has a 
conditional expectation, we can also apply the same  method in this case.

\begin{cor}
Let $M$ be a type $\mathrm{II}_1$ factor and $N$ a subfactor of finite index. 
If the relative commutant $N'\cap M$ is trivial, then 
the set of intermediate subfactors is a finite set. 
\end{cor}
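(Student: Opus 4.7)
The plan is to mirror the proof of the preceding corollary, observing that in the type $\mathrm{II}_1$ factor setting all required conditional expectations and compatibility conditions are automatic, so there is no need to invoke Izumi's result. Since $M$ admits a unique faithful normal tracial state $\tau$ and $N$ has finite Jones index, the trace-preserving conditional expectation $E_N^M : M \to N$ exists and has finite index equal to $[M:N]$. For any intermediate subfactor $P$ with $N \subset P \subset M$, the trace-preserving conditional expectation $E_P^M : M \to P$ also exists, and by the uniqueness of trace-preserving expectations it satisfies $E_N^P \circ E_P^M = E_N^M$, where $E_N^P := E_N^M|_P$. Hence every intermediate subfactor $P$ belongs to $\mathrm{IMS}(N, M, E_N^M)$, and \propref{proposition:perturbation} applies directly.

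Next I would fix the constant $\gamma > 0$ supplied by \propref{proposition:perturbation}. For any two intermediate subfactors $P, Q$ with $d(P, Q) < \gamma$, the proposition produces a unitary $u \in N' \cap M$ with $uPu^* = Q$. Since $N' \cap M = \mathbb{C}I$ by hypothesis, $u$ is a scalar multiple of the identity, forcing $P = Q$. Consequently distinct intermediate subfactors are $\gamma$-separated in the Hausdorff metric.

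The finiteness will then follow from a compactness argument on Jones projections, exactly as in the preceding corollary. The Jones projection $e_P$ of any intermediate subfactor $P$ lies in $N' \cap \langle M, e_N\rangle$, which is finite-dimensional by Proposition 2.7.3 of \cite{Watatani1} since $N' \cap M = \mathbb{C}I$ is (trivially) finite-dimensional. By \lemref{lem:Jones projection metric},
\[
d(P, Q) \le \|\mathrm{Index}\,E_N^M\| \cdot \|e_P - e_Q\|,
\]
so distinct intermediate subfactors yield Jones projections separated by at least $\gamma / \|\mathrm{Index}\,E_N^M\|$ in operator norm. The set of projections in the finite-dimensional algebra $N' \cap \langle M, e_N\rangle$ is norm-compact, hence covered by finitely many balls of this radius, each containing at most one Jones projection of an intermediate subfactor, which gives the desired finiteness.

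The only potential concern is that \propref{proposition:perturbation} is stated in the C$^*$-algebraic framework, whereas here one works with a type $\mathrm{II}_1$ factor; however, a type $\mathrm{II}_1$ factor is simple as a C$^*$-algebra and intermediate subfactors are simple, while the trace-preserving conditional expectations provide all the data the proposition requires, so no modification is necessary. Thus the argument is essentially a direct transcription of the simple C$^*$-algebra corollary, and no step presents a serious obstacle.
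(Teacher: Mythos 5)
Your proposal is correct and follows the same route as the paper, whose proof of this corollary is the single observation that in a type $\mathrm{II}_1$ factor all the needed (trace-preserving, compatible) conditional expectations exist automatically, so the argument of the preceding corollary applies verbatim. You have simply spelled out the details of that same method — membership in $\mathrm{IMS}(N,M,E_N^M)$, Proposition~\ref{proposition:perturbation}, and the compactness of the projections in the finite-dimensional algebra $N'\cap\langle M,e_N\rangle$ — all of which check out.
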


\

\footnotesize
(Shoji Ino) {\sc Department of Mathematical Sciences, Kyushu University, Motooka, Fukuoka, 819-0395, Japan.}

{\it E-mail address:} s-ino@math.kyushu-u.ac.jp 

\

(Yasuo Watatani) {\sc Department of Mathematical Sciences, Kyushu University, Motooka, Fukuoka, 819-0395, Japan.}

{\it E-mail address:} watatani@math.kyushu-u.ac.jp 
\end{document}